\newcommand*\patchAmsMathEnvironmentForLineno[1]{%
  \expandafter\let\csname old#1\expandafter\endcsname\csname #1\endcsname 
  \expandafter\let\csname oldend#1\expandafter\endcsname\csname end#1\endcsname 
  \renewenvironment{#1}%
     {\linenomath\csname old#1\endcsname}%
     {\csname oldend#1\endcsname\endlinenomath}}%
\newcommand*\patchBothAmsMathEnvironmentsForLineno[1]{%
  \patchAmsMathEnvironmentForLineno{#1}%
  \patchAmsMathEnvironmentForLineno{#1*}}%
\definecolor{darkred}{rgb}{0.5,0,0}
\definecolor{darkgreen}{rgb}{0,0.5,0}
\definecolor{darkblue}{rgb}{0,0,0.5}
\newtheorem{theorem}{Theorem}[section]
\newtheorem{corollary}[theorem]{Corollary}
\newtheorem{proposition}[theorem]{Proposition}
\newtheorem{lemma}[theorem]{Lemma}
\newtheorem{lem}[theorem]{}
\theoremstyle{definition}
\newtheorem{definition}[theorem]{Definition}
\theoremstyle{remark}
\newtheorem{remark}[theorem]{Remark}
\newtheorem{example}[theorem]{Example}
\newcommand{\blem}{\begin{lem} \rm}
\newcommand{\elem}{\end{lem}}
\newcommand\M{\mathcal{M}}
\newcommand{\J}{\mathcal{J}}
\newcommand{\U}{\mathcal{U}}
\newcommand{\CC}{C\kern-1.3ex|}
\newcommand{\Z}{\mathbb{Z}}
\newcommand{\Q}{\mathbb{Q}}
\newcommand{\on}{\operatorname}
\newcommand{\univ}{\on{univ}}
\newcommand{\Ver}{\on{Vert}}
\newcommand\cU{\mathcal{U}}
\newcommand\cM{\mathcal{M}}
\newcommand\cN{\mathcal{N}}
\newcommand{\s}{\on{s}}
\newcommand{\us}{{\on{us}}}
\newcommand{\Aut}{ \on{Aut} }
\renewcommand{\ker}{ \on{ker}}
\newcommand\dirac{/\kern-1.2ex\partial} 
\newcommand\qu{/\kern-.7ex/} 
\newcommand\lqu{\backslash \kern-.7ex \backslash} 
\newcommand\dr{r_+ \kern-.7ex - \kern-.7ex r_-}
\newcommand{\labell}\label
\renewcommand{\d}{{\on{d}}}
\newcommand{\ol}{\overline}
\newcommand{\olp}{\ol{\partial}}
\newcommand\eps{\epsilon}
\newcommand{\ti}{\tilde}
\newcommand\Map{\on{Map}}
\newcommand\ul{\underline}
\newcommand\reg{{\on{reg}}}
\newcommand\bdefn{\begin{definition}}
\newcommand\edefn{\end{definition}}
\newcommand\bea{\begin{eqnarray*}}
\newcommand\eea{\end{eqnarray*}}
\newcommand\bcv{\left[ \begin{array}{r} }
\newcommand\ecv{\end{array} \right] }
\newcommand\bma{\left[ \begin{array}{l} }
\newcommand\ema{\end{array} \right]}
\newcommand\ben{\begin{enumerate}}
\newcommand\een{\end{enumerate}}
\newcommand\beq{\begin{equation}}
\newcommand\eeq{\end{equation}}
\newcommand\bex{\begin{example}}
\newcommand\bsj{\left\{ \begin{array}{rrr} }
\newcommand\esj{\end{array} \right\}}
\newcommand\eex{\end{example}}
\newcommand\sx{*\kern-.5ex_X}
\newcommand{\cw}[1]{{#1}}
\newcommand{\cws}[1]{{}}
\def\mathunderaccent#1{\let\theaccent#1\mathpalette\putaccentunder}
\def\putaccentunder#1#2{\oalign{$#1#2$\crcr\hidewidth \vbox
to.2ex{\hbox{$#1\theaccent{}$}\vss}\hidewidth}}
\def\dg@twoarrowedvector(#1,#2)#3{%
   \begingroup 
   \dg@XTEMP=#1\relax\multiply\dg@XTEMP\m@ne\relax 
   \dg@YTEMP=#2\relax\multiply\dg@YTEMP\m@ne\relax 
   \dg@ZTEMP=#1\relax 
   \ifnum\dg@ZTEMP<\z@
     \multiply\dg@ZTEMP\m@ne\relax \fi 
   \ifnum\dg@YTEMP<\z@
     \advance\dg@ZTEMP by -\dg@YTEMP 
   \else \advance\dg@ZTEMP by \dg@YTEMP \fi 
   \dg@XSHIFT=#2\relax\multiply\dg@XSHIFT\m@ne\relax\multiply\dg@XSHIFT\twoarrowsep\relax 
     \divide\dg@XSHIFT by \dg@ZTEMP\relax 
   \dg@YSHIFT=#1\relax\multiply\dg@YSHIFT\twoarrowsep\relax\divide\dg@YSHIFT by \dg@ZTEMP\relax 
   \begin{picture}(0,0)%
      \thinlines 
      \put(-\dg@XSHIFT,-\dg@YSHIFT){\vector(#1,#2){#3}}%
      \put(\dg@XSHIFT,\dg@YSHIFT){\line(#1,#2){#3}}%
      \put(\dg@XSHIFT,\dg@YSHIFT){\vector(\dg@XTEMP,\dg@YTEMP){0}}
   \end{picture}%
   \endgroup}%
\newsavebox{\foobox}
\newcommand{\slantbox}[2][.5]{\mbox{%
        \sbox{\foobox}{#2}%
        \hskip\wd\foobox
        \pdfsave
        \pdfsetmatrix{1 0 #1 1}%
        \llap{\usebox{\foobox}}%
        \pdfrestore
}}
\def\sfvarfhash{%
  \stackinset{c}{}{b}{1.5pt}{\rule{1.45ex}{\sfrlthk}\kern1pt}{%
  \stackinset{c}{}{b}{3.7pt}{\rule{1.45ex}{\sfrlthk}\kern1pt}{%
    \textsf{\itshape ff}%
  }}%
}
\def\sfvarhash{%
  \kern1pt%
  \stackinset{c}{}{b}{1.5pt}{\rule{1.45ex}{\sfrlthk}\kern1pt}{%
  \stackinset{c}{}{b}{3.7pt}{\rule{1.45ex}{\sfrlthk}\kern1pt}{%
    \slantbox[.2]{\mysfrule\kern2pt\mysfrule\kern2.3pt}%
  }}%
  \kern1pt%
}
\def\sfrlthk{.17ex}
\def\mysfrule{\rule{\sfrlthk}{1.4ex}}
\def\varfhash{%
  \kern -1.5pt%
  \stackinset{c}{}{c}{-1.7pt}{\kern1pt\rule{1.7ex}{\rlthk}}{%
  \stackinset{c}{}{c}{1.7pt}{\kern1pt\rule{1.7ex}{\rlthk}}{%
    \kern2pt\itshape ff\kern2pt%
  }}%
}
\def\varhash{%
  \kern-.5pt%
  \stackinset{c}{}{c}{-1.7pt}{\kern1pt\rule{1.7ex}{\rlthk}}{%
  \stackinset{c}{}{c}{1.7pt}{\kern1pt\rule{1.7ex}{\rlthk}}{%
    \kern2pt\kern.2pt\slantbox[.2]{\myrule\kern2.4pt\myrule}\kern.2pt\kern2pt%
  }}%
  \kern1pt%
}
\def\rlthk{.13ex}
\def\myrule{\rule[-.33ex]{\rlthk}{1.8ex}}
\begin{document} 

\title{Partly-local domain-dependent almost complex structures}

\author{Chris Woodward}

\address{Mathematics-Hill Center,
Rutgers University, 110 Frelinghuysen Road, Piscataway, NJ 08854-8019,
U.S.A.}  \email{j.palmer@math.rutgers.edu}

\author{Guangbo Xu}

\address{Simons Center for Geometry and Physics, State University
of New York, Stony Brook, NY 11794-3636, U.S.A.} \email{gxu@scgp.stonybrook.edu}

\begin{abstract}
  We fill a gap pointed out by Nick Sheridan in the proof of
  independence of genus zero Gromov-Witten invariants from the choice
  of divisor in the Cieliebak-Mohnke perturbation scheme
 \cite{cm:trans}.
\cws{ specifically the independence of the invariants
  from the choice of stabilizing divisor.}
\end{abstract}

\maketitle

\tableofcontents

\section{Introduction}

A convenient scheme to regularize moduli spaces of genus zero
pseudoholomorphic maps was introduced by Cieliebak-Mohnke
\cite{cm:trans}. In this scheme one chooses a Donaldson hypersurface
to stabilize the domains and then a generic domain-dependent almost
complex structure to achieve regularity.  The scheme leads to a
definition of genus zero Gromov-Witten invariants over the rationals
that counts some geometric objects, as opposed to the more abstract
perturbation schemes in the Kuranishi or polyfold methods.  The proof
of independence of Gromov-Witten invariants from the choice of
Donaldson hypersurfaces in \cite[8.18]{cm:trans} depends on the
construction of a parametrized moduli space for the following
situation: Given a type $\Gamma$ of stable marked curve let
$\ol{\cU}_\Gamma \to \ol{\cM}_\Gamma$ denote the universal curve over
the compactified moduli space $\ol{\cM}_\Gamma$ of curves of type
$\Gamma$.  Let $\J_\tau(X,\omega)$ denote the space of $\omega$-tamed
almost complex structures on the given symplectic manifold
$(X,\omega)$ with rational symplectic class
$[\omega] \in H^2(X,\omega)$.  A {\it domain-dependent almost complex
  structure} is a map
\[ J_\Gamma: \ol{\cU}_\Gamma \to J_\tau(X,\omega) .\]
Associated to a coherent collection of sufficiently generic choices
$\ul{J} = (J_\Gamma)$ is a Gromov-Witten pseudocycle
$\ol{\M}_{0,n}(X,\beta) \subset X^n$ for each number of markings $n$
and each class $\beta \in H_2(X)$.

Naturally one wishes to show that the resulting pseudocycle is
independent, up to cobordism between pseudocycles, from the choice of
Donaldson hypersurface.  Suppose that $V', V'' \subset X$ are two
Donaldson hypersurfaces and
$J' = (J'_{\Gamma'}), J'' = (J''_{\Gamma''})$ are two collections of
domain dependent almost complex structures depending on the
intersection points with $V'$ resp. $V''$, depending on some
combinatorial type $\Gamma'$ resp. $\Gamma''$.  Consider the pullback
\[ (f'')^* J'_{\Gamma'}, \ (f')^* J''_{\Gamma''}: 
\ol{\U}_\Gamma \to \J(X,V',V'') \] 
to a common universal curve $\ol{\U}_\Gamma$ for some type $\Gamma$
recording both sets of markings (so that if $\Gamma'$ resp. $\Gamma''$
has $n'$ resp. $n''$ leaves then $\Gamma$ has $n'+n''$ leaves).  One
wishes to construct a homotopy between
$(f'')^* J'_{\Gamma'}, \ (f')^* J''_{\Gamma''}$ to construct a
cobordism between the corresponding pseudocycles $\ol{\M}'_n(X,\beta)$ and
$\ol{\M}_n''(X,\beta)$ .  Unfortunately, as pointed out by Nick
Sheridan, the pullbacks
$(f'')^* J'_{\Gamma'}, \ (f')^* J''_{\Gamma''}$ do not satisfy the
locality condition used to show compactness. 
\cw{That is, the restriction
of the almost complex structures $(f')^* J_{\Gamma''}''$ (or
$(f'')^* J_{\Gamma'}'$) to some irreducible component $C_v$ of the
domain curve $C$ are not independent of markings on other components
$C_{v'} \neq C_v$, because collapsed components $C_v$ may map to
non-special points $f'(C_v) = \{ w \} \in f'(C)$ under the forgetful
map $f'$.}

In this note we modify the definition of the locality on the collapsed
components so that one may homotope between the two domain-dependent
almost complex structures without losing compactness. 
Instead of directly homotoping between the given pull-backs, one first
homotopes each pullback to an almost complex structure that is equal
to a base almost complex structure near any special point.

\section{Partly local perturbations}

We introduce the following notation for stable maps with two types of
markings.  Let $\Gamma$ be a combinatorial type of genus zero stable
curve with $n = n'+n''$ markings.  Let $V',V''$ be Donaldson
hypersurfaces in the symplectic manifold $(X,\omega)$, that is,
symplectic hypersurfaces representing large multiples $k' [\omega]$
resp.  $k'' [\omega]$ of the symplectic class
$[\omega] \in H^2(X,\Q)$.  Suppose $V'$ and $V''$ intersect
transversely.  Let $\mathcal{J} (X, V', V'')$ be the space of
$\omega$-tamed almost complex structures on $X$ that make $V'$ and
$V''$ almost complex. Let
$\J^E(X,V',V'') \subset \mathcal{J} (X, V', V'')$ be some contractible
subset of almost complex structures $J: TX \to TX$ preserving $TV'$
and $TV''$ taming the symplectic form $\omega$ and so that any
non-constant pseudoholomorphic $J$-holomorphic map $u: C \to X$ with
some given energy bound $E(u) < E$ to $X$ meets $V',V''$ each in at
least three but finitely many distinct points
$u^{-1}(V'), u^{-1}(V'')$ in the domain $C$ as in
\cite[8.18]{cm:trans}.  Let
\[ J_{V',V''} \in \bigcap_E  \J^E(X,V',V'') \] 
be a base almost complex structure that satisfies these conditions
without restriction on the energy of the map $u: C \to X$.  

The universal curve breaks into irreducible components corresponding
to the vertices of the combinatorial type.  Let
$\ol{\cU}_\Gamma \to \ol{\cM}_\Gamma$ be the closure of the universal
curve of type $\Gamma$.  For each vertex $v \in \Ver(\Gamma)$ let
$\Gamma(v)$ denote the tree with the single vertex $v$ and edges those
of $\Gamma$ meeting $v$.  Let
$\ol{\cU}_{\Gamma,v} \subset \ol{\cU}_\Gamma$ be the component
corresponding to $v$, obtained by pulling back $\ol{\cU}_{\Gamma(v)}$
so that $\cU_\Gamma$ is obtained from the disjoint union of the curves
$\cU_{\Gamma,v} \to \cM_\Gamma$ by identifying at nodes.

Cieliebak-Mohnke \cite{cm:trans} requires that the almost complex
structure is equal to the base almost complex structure near the
nodes.  This condition is not true for domain-dependent almost complex
structures pulled back under forgetful maps, and so must be relaxed as
follows.  Recall that Knudsen's (genus zero) universal curve
$\ol{\cU}_\Gamma$ \cite{kn:proj2} is a smooth projective variety, and in
particular a complex manifold.  A {\it domain-dependent almost complex
  structure} for type $\Gamma$ of stable genus zero curve is an almost
complex structure
\[ J_\Gamma:  T(\ol{\cU}_\Gamma \times X ) \to 
T(\ol{\cU}_\Gamma \times X)  \]  
that preserves the splitting of the tangent bundle
$T(\ol{\cU}_\Gamma \times X)$ into factors
$T\ol{\cU}_\Gamma \times TX$ and that is equal to the standard complex
structure on the tangent space to the projective variety
$\ol{\cU}_{\Gamma}$, and gives rise to a map from $\ol{\cU}_\Gamma$ to
$\J(X,V',V'')$ with the same notation $J_\Gamma$.  Let
\[ \J_\Gamma^E(X,V',V'') \subset \Map(\ol{\cU}_\Gamma,
\J^E(X,V',V'')) \]
denote the space of such maps taking values in $\J^E(X,V',V'')$.  With
this definition, the standard proof of Gromov convergence applies: Any
sequence $u_\nu: C_\nu \to X$ of $J_\Gamma$-holomorphic maps with
energy $E(u) <E $ may be viewed as a finite energy sequence of maps to
$\ol{\cU}_\Gamma \times X$.  Therefore it has a subsequence with a
Gromov limit $u: C \to X$ where the stabilization $C^s$ of $C$ is a
fiber of $\ol{\cU}_\Gamma$ and $u$ is pseudoholomorphic for the
pull-back of the restriction of $J_\Gamma$ to $C^s$.  If we restrict
to sequences of maps $u_\nu: C_\nu \to X$ sending the markings to $V'$
or $V''$ then in fact $C^s$ is equal to $C$, since non-constant
components of $u$ with fewer than three markings are impossible.

We distinguish components of the curve that are collapsed under
forgetting the first or second group of markings.  Let
\[ f': \ol{\cU}_{\Gamma} \to \ol{\cU}_{\Gamma''}, \quad 
f'':\ol{\cU}_{\Gamma} \to \ol{\cU}_{\Gamma'} \]  
denote the forgetful maps forgetting the first $n'$ resp. last $n''$
markings and stabilizing.  Call a component of $C$ {\it $f'$-unstable}
if it is collapsed by $f'$, and {\it $f'$-stable} otherwise, in which
case it corresponds to a component of $f'(C)$.  $f''$-unstable
components are defined similarly.

\begin{definition} \label{plocal} 
{\rm (Local and partly local almost complex
    structures) } 
\begin{enumerate} 
 \item  A domain-dependent almost complex structure
\[ J_\Gamma: \ol{\cU}_\Gamma \to \J(X,V',V'') \]
is {\it local} if and only if for each $v \in \Ver(\Gamma)$ the restriction
$J_\Gamma | \ol{\cU}_{\Gamma,v}$ is local in the sense that
$J_\Gamma | \ol{\cU}_{\Gamma,v}$ is pulled back from some map
$J_{\Gamma,v}$ defined on the universal curve $\ol{\cU}_{\Gamma(v)}$
and equal to $J_{V',V''}$ near any special point of
$\ol{\cU}_{\Gamma,v}$.
\item A domain-dependent almost complex structure
 \[ J_\Gamma: \ol{\cU}_\Gamma \to \J(X,V',V'') \]
is {\it$f'$-local} if and only if 
\begin{enumerate} 
\item for each $v \in \Ver(\Gamma)$ such that $\cU_{\Gamma,v}$ is
  $f'$-stable (that is, has sufficiently many $V''$ markings) then
  $J_\Gamma | \ol{\cU}_{\Gamma,v}$ is local in the sense that
  $J_\Gamma | \ol{\cU}_{\Gamma,v}$ is pulled back from some map
  $J_{\Gamma,v}$ defined on the universal curve $\ol{\cU}_{\Gamma(v)}$
  and equal to $J_{V,V'}$ near any point $z \in C$ mapping to a
  special point $f'(z)$ of $f'(C)$, and
\item for each $v \in \Ver(\Gamma)$ such that $\cU_{\Gamma,v}$ is
  $f'$-unstable (that is, does not have sufficiently many $V''$
  markings) then $J_\Gamma | \ol{\cU}_{\Gamma,v}$ is constant on each
  fiber of $\ol{\cU}_{\Gamma,v}$.
\end{enumerate} 
The definition of $f''$-local is similar. 
\end{enumerate} 
\end{definition}

\begin{remark} \label{weaker} Note that $f'$-pullbacks
  $ (f')^* J_{\Gamma''}$ are $f'$-local, and local almost complex
  structures are $f'$-local.  The condition that an almost complex
  structure be $f'$-local is weaker than the condition that it be
  pulled back under $f'$, because the restriction
  $J_\Gamma | \ol{\cU}_{\Gamma,v}$ is allowed to depend on special
  points $z \in \ol{\cU}_{\Gamma,v}$ that are forgotten under
  $f'$. \end{remark}

\begin{remark} \label{pullback} One can reformulate the $f'$-local
  condition as a pullback condition for a forgetful map that forgets
  almost the same markings as those forgotten by $f'$.  Let $C$ be a
  curve of type $\Gamma$.  Let $C^{\us} \subset C$ be the locus
  collapsed by $f'$.  For each connected component
  $C_i, i = 1,\ldots, k$ of $C^{\us}$ mapping to a marking of $f'(C)$
  choose $j(i)$ so that $z_{j(i)} \in C_i$.  Let
  $I^{\us} \subset \{ 1, \ldots, n \}$ denote the set of indices $j$
  of markings $z_j \in C^{\us}$ with $z_j \neq z_{j(i)}, \forall i$.
  Forgetting the markings with indices in $I^{\us}$ and collapsing
  defines a map $f: C \to f(C)$ such that any collapsed component of
  $C$ maps to a special point of $f(C)$.  Let $\Gamma^f$ denote the
  combinatorial type of $f(C)$.  Then $J_\Gamma$ is $f'$-local if and
  only if $J_\Gamma = f^* J_{\Gamma^f}$ is pulled back from a local
  domain-dependent almost complex structure
  $J_{\Gamma^f}: \ol{\cU}_{\Gamma^f} \to \J(X,V',V'')$.  Indeed, the
  collapsed components under $C \to f(C)$ are the same as those of
  $f': C \to f'(C)$ since adding a single marking $z_i$ on the
  components that collapse $C_v$ to markings $f(C_v) \subset f(C)$
  does not stabilize $C_v$.  So the pull-back condition
  $J_\Gamma = f^* J_{\Gamma^f}$ requires $J_\Gamma$ to be constant on
  the components $C_v$ such that $\dim(f(C_v)) = 0$.  On the other
  hand, any irreducible component of $f(C)$ is isomorphic, as a stable
  marked curve, to an irreducible component of $C$ not collapsed under
  $f'$.
\end{remark} 

\begin{remark} \label{both} There also exist domain-dependent almost
  complex structures that are both $f'$ and $f''$-local.  Indeed
  suppose that $C$ is a curve of type $\Gamma$, and
  $K \subset \{1,\ldots, n' + n'' \} $ is the set of markings on
  components collapsed by $f'$ or $f''$.  Forgetting the markings
  $z_k, k \in K$ defines a forgetful map
  $f^{\s\s}: C \to f^{\s\s}(C)$, where $f^{\s\s}(C)$ is of some
  (possibly empty) type $\Gamma^{\s\s}$.  Let
  $J_{\Gamma^{\s\s}}: \ol{\cU}_{\Gamma^{\s\s}} \to \J(X,V',V'')$ be a
  domain-dependent almost complex structure for type $\Gamma^{\s\s}$.
  Then $(f^{\s\s})^* J_{\Gamma^{\s\s}}$ is both $f'$ and $f''$-local
  (taking the constant structure $J_{V',V''}$ if $\Gamma^{\s\s}$ is
  empty.)
\end{remark}

\begin{lemma} \label{extends} The space of $f'$-local
  resp. $f''$-local resp. $f'$ and $f''$-local almost complex
  structures tamed by or compatible with the symplectic form $\omega$
  is contractible.  Any $f'$-local resp. $f''$-local resp. $f'$ and
  $f''$-local $J_\Gamma | \partial \ol{\U}_\Gamma$ defined on the
  boundary
  $\partial \ol{\U}_\Gamma := \ol{\U}_\Gamma | \partial
  \ol{\M}_\Gamma$
  extends to a $f'$-local resp. $f''$-local resp. $f'$ and $f''$-local
  structure $J_\Gamma$ over an open neighborhood of the boundary
  $\partial \ol{\U}_\Gamma$ in $\ol{\U}_\Gamma$.
\end{lemma}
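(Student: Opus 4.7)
The plan is to use Remarks \ref{pullback} and \ref{both} to reduce both the contractibility and extension statements to the corresponding (known) statements for purely local domain-dependent almost complex structures, and then to argue those by contractibility of the target together with a stratum-by-stratum argument.

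First I would handle the reduction. By Remark \ref{pullback}, pullback along $f : \ol{\cU}_\Gamma \to \ol{\cU}_{\Gamma^f}$ gives a bijection between $f'$-local structures on $\ol{\cU}_\Gamma$ and local structures on $\ol{\cU}_{\Gamma^f}$; the analogous statement for $f''$-local structures uses the forgetful map in the other direction, and for simultaneously $f'$- and $f''$-local structures one uses $f^{\s\s} : \ol{\cU}_\Gamma \to \ol{\cU}_{\Gamma^{\s\s}}$ as in Remark \ref{both}. These identifications are compatible with restriction to $\partial \ol{\cU}_\Gamma$, since all three forgetful maps are morphisms of universal curves over the respective $\ol{\cM}$'s. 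Thus it suffices to prove both assertions for purely local structures on some fixed universal curve $\ol{\cU}_{\tilde\Gamma}$.

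For contractibility, a local domain-dependent almost complex structure on $\ol{\cU}_{\tilde\Gamma}$ is the data of a map $J_{\tilde\Gamma,v} : \ol{\cU}_{\tilde\Gamma(v)} \to \J^E(X,V',V'')$ for each vertex $v$, equal to the base structure $J_{V',V''}$ on an open neighborhood of every special point. Since $\J^E(X,V',V'')$ is contractible by hypothesis, one can fix a continuous deformation $\rho_t : \J^E(X,V',V'') \to \J^E(X,V',V'')$ with $\rho_0 = \Id$ and $\rho_1 \equiv J_{V',V''}$. Post-composition $J_{\tilde\Gamma,v} \mapsto \rho_t \circ J_{\tilde\Gamma,v}$ preserves the locality condition and the agreement with $J_{V',V''}$ near special points, and deformation retracts the space of local structures onto the constant structure $J_{V',V''}$. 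The same $\rho_t$ works for all three variants in parallel, so contractibility follows.

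For the extension statement, pick a tubular neighborhood of $\partial \ol{\cM}_{\tilde\Gamma}$ in $\ol{\cM}_{\tilde\Gamma}$ together with a retraction $r$ onto the boundary, and let $\chi$ be a smooth cutoff equal to $1$ near $\partial \ol{\cM}_{\tilde\Gamma}$ and supported in this neighborhood. Given a local $J_{\tilde\Gamma} | \partial \ol{\cU}_{\tilde\Gamma}$, I would define the extension fiberwise by taking $\rho_{1-\chi} \circ (r^* J_{\tilde\Gamma} | \partial \ol{\cU}_{\tilde\Gamma})$, using the retraction $\rho_t$ from Step~2 to interpolate between the pulled-back boundary data near the boundary and $J_{V',V''}$ further away. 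This produces a local extension over an open neighborhood of $\partial \ol{\cU}_{\tilde\Gamma}$; translating back via the forgetful map $f$ (resp.\ $f''$, resp.\ $f^{\s\s}$) yields the desired $f'$-local (resp.\ $f''$-local, resp.\ $f'$ and $f''$-local) extension on $\ol{\cU}_\Gamma$.

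The step I expect to be the main obstacle is the extension step, specifically ensuring compatibility with the nested strata of $\partial \ol{\cM}_{\tilde\Gamma}$. The tubular neighborhood of $\partial \ol{\cM}_{\tilde\Gamma}$ is not a simple collar because the boundary is itself stratified by further degenerations of $\tilde\Gamma$, and the locality condition must be preserved on every stratum simultaneously. I would handle this by an induction on codimension of strata: extend first over a neighborhood of the deepest strata, then successively over lower-codimension strata, using at each stage that the space of local extensions restricting to the already-constructed data is contractible (and in particular nonempty) by the argument of Step~2 applied relative to the boundary.
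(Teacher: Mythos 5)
Your overall plan coincides with the paper's: reduce the partly-local cases to the purely local case via the pullback characterization of Remark \ref{pullback} (resp.\ Remark \ref{both}), get contractibility from contractibility of the target space of almost complex structures, and build the extension stratum by stratum. The contractibility argument and the reduction step are fine and match the paper in substance.

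The gap is in the extension step, in the expression $r^*\bigl(J_{\tilde\Gamma}\,|\,\partial\ol{\cU}_{\tilde\Gamma}\bigr)$. The retraction $r$ is a map of the base $\ol{\cM}_{\tilde\Gamma}$ onto its boundary, but a domain-dependent almost complex structure lives on the universal curve, and the universal curve does not pull back under $r$: the fiber over a point $m$ near a boundary stratum is a smooth (or less degenerate) curve, while the fiber over $r(m)$ is nodal, so there is no map $\ol{\cU}_{\tilde\Gamma}|_{\cN} \to \partial\ol{\cU}_{\tilde\Gamma}$ covering $r$ through which to pull back. The identification exists only away from the nodes, and the content of the extension statement is precisely what happens on the neck region created by gluing. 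This is where the locality hypothesis is actually used, and it is the step your proof never invokes: because a local structure equals the base structure $J_{V',V''}$ on neighborhoods $U_\pm$ of the node, one can replace $U_\pm$ by the glued neck and declare the structure to be $J_{V',V''}$ there, which is the paper's ``gluing construction.'' In the partly-local case the paper additionally observes that the nodes of $\Gamma^f$ join non-collapsed components, so the pulled-back structure really is constant near them and the same mechanism applies. Your closing worry about the stratification of $\partial\ol{\cM}_{\tilde\Gamma}$ is legitimate but secondary (the paper handles it by patching extensions constructed near each stratum); the primary missing ingredient is the node-neighborhood constancy that makes any extension across the boundary possible at all.
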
 

\begin{proof} Contractibility follows from the contractibility of
  tamed or compatible almost complex structures.  Since the space of
  $f'$-local tamed almost complex structures is contractible, it
  suffices to show the existence of an extension of $J_\Gamma$ near
  any stratum $\ol{\U}_{\Gamma_1} \subset \ol{\U}_\Gamma$ and then
  patch together the extensions.  \cw{Local domain-dependent almost
  complex structures $J_\Gamma$ extend by a gluing construction in
  which open balls $U_+, U_-$ around a node are replaced by a
  punctured ball $V \cong U_+^\times \cong U_-^\times$ on which the
  almost complex structure is equal to the base almost complex
  structure $J_{V', V''}$.}

  In the partly-local case recall from Remark \ref{pullback} that
  $J_\Gamma$ is the pull-back of a local almost complex structure
  $J_{\Gamma^f}$ near any particular fiber of the universal curve.
  Define an extension of $J_\Gamma$ near curves of type $\Gamma_1$ by
  first extending $J_{\Gamma^f}$ and then pulling back.  In more
    detail, let $C$ be such a curve and let $C_1,\ldots, C_k$ denote
    the connected components of $C$ collapsed by $f'$ to a non-special
    point of $f'(C)$.  Choose a marking $z_i \in C_i$ and let
    $\Gamma^{\s}$ resp. $\Gamma_1^{\s}$ denote the type obtained from
    $\Gamma$ resp. $\Gamma_1$ by forgetting all markings on $C_i$
    except $z_i$, for each $i = 1,\ldots, k$.  Consider the forgetful
    map
  \[ f: \ol{\cU}_{\Gamma} \to \ol{\cU}_{\Gamma^f} \]
  that forgets all but the marking $z_i$ on $C_i$.  As discussed
in Remark \ref{pullback}
  $J_{\Gamma_1}$ is  the pullback of a complex structure
  \[ J_{\Gamma_1^f}: \ol{\cU}_{\Gamma_1^f} \to \J(X,V',V'') . \]
  Since the complex structure $J_{\Gamma_1^f}$ is constant equal to
  the base almost complex structure $J_{V',V''}$ near the nodes (which
  must join non-collapsed components) $J_{\Gamma_1^f}$ naturally
  extends to a domain-dependent almost complex structure
  $J_{\Gamma^f}$ on a neighborhood $\cN_{\Gamma_1^f}$ of
  $\ol{\cU}_{\Gamma_1^f}$ in $\ol{\cU}_{\Gamma^f}$ by taking
  $J_{\Gamma^f}$ to equal $J_{V',V''}$ near the nodes.  Now take
  $J_\Gamma = f^* J_{\Gamma^f}$ to obtain an extension of $J_\Gamma$
  from $\cU_{\Gamma_1}$ to a neighborhood $f^{-1}(\cN_{\Gamma_1^f})$.
  The proof for $f'$ local or $f'$ and $f''$-local structures is
  similar.
\end{proof}

\section{Transversality}
 
We wish to inductively construct partly-local almost complex
structures so that the moduli spaces of stable maps define
pseudocycles. Recall that the combinatorial type of a stable map is
obtained from the type of stable curve by decorating the vertices with
homology classes; we also wish to record the intersection
multiplicities with the Donaldson hypersurfaces.  More precisely, a
{\it type} of stable map $u$ from $C$ to $(X,V',V'')$ consists of a
type $\Gamma$ the stable curve $C$ (the graph with vertices
corresponding to components and edges corresponding to markings and
nodes) with the labelling of vertices $v \in \Ver(\Gamma)$ by homology
class $d(v) = [u |C_v] \in H_2(X)$, labelling of the semi-infinite
edges $e$ by either $V'$ or $V''$,\footnote{To obtain evaluation maps
  one should allow additional edges, but here we ignore evaluation
  maps.} and by the intersection multiplicities $m'(e), m''(e)$ with
$V'$ and $V''$ (possibly zero if the corresponding marking does not
map to $V'$ or $V''$.  A stable map is {\it adapted} of type $\Gamma$
if each connected component of $u^{-1}(V')$ resp.  $u^{-1}(V'')$
contains at least one marking $z_e$ corresponding to an edge $e$
labelled $V'$ resp. $V''$, and each marking $z_e$ maps to $V'$ or
$V''$ depending on its label.  A stable map is {\it adapted} of type
$\Gamma$ if
\begin{enumerate}
\item each connected component of $u^{-1}(V')$ resp. $u^{-1}(V'')$ contains at least one marking $z_e$ corresponding to an edge $e$ with labelling $m'(e) \geq 1$ resp. $m''(e) \geq 1$, and

\item if $m'(e) \geq 1$ resp. $m''(e) \geq 1$, then the marking $z_e$ is mapped to $V'$ resp. $V''$.
\end{enumerate}

By forgetting the extra data and stabilization one can associate to
each type of stable maps to a type of stable curves. In notation we do
not distinguish the two notions of types.  Given a type of stable map
$\Gamma$ choose a domain-dependent almost complex structure
$J_\Gamma$.  Denote by $\M_\Gamma(X,J_\Gamma)$ the moduli space of
adapted $J_\Gamma$-holomorphic stable maps $u: C \to X$ of type
$\Gamma$, such that for each $v \in {\rm Vert}(\Gamma)$ with
$d(v) \neq 0$, the image of $u_v$ is not contained in $V' \cup V''$,
and for each semi-infinite edge $e$ attached to $v$, the local
intersection number of $u_v$ with $V'$ resp. $V''$ at $z_e$ is equal
to $m'(e)$ resp. $m''(e)$.   \cw{The moduli space $\M_\Gamma(X,J_\Gamma)$
is locally cut out by a smooth map of Banach manifolds: Given a local
trivialization of the universal curve given by an subset
$\cM^i_\Gamma \subset \cM_\Gamma$ and a trivialization
$C \times \cM^i_\Gamma \to \cU_\Gamma^i = \cU_\Gamma
|_{\cM^i_\Gamma}$,
we consider the space of maps $\Map(C,X)_{k,p}$ of Sobolev class $k,p$
for $p \ge 2$ satisfying the above constraints and $k$ sufficiently
large to the space of $0,1$-forms with values in $TX$ given by the
Cauchy-Riemann operator $\olp_{J_\Gamma}$ associated to $J_\Gamma$.
The linearization of this operator is denoted $D_{u}$ (or
$D_{u,J_\Gamma}$ to emphasize dependence on $J_\Gamma$) and the map
$u$ is called {\it regular} if $D_{u}$ is surjective.}  We call a type
$\Gamma$ of stable map $u: C \to X$ {\it crowded} if there is a
maximal ghost subtree of the domain $C_1 \subset C $ with more than
one marking $z_e \in C_1$ and {\it uncrowded} otherwise.  It is not in
general possible to achieve transversality for crowded types using the
Cieliebak-Mohnke perturbation scheme.

\begin{definition} We say a domain-dependent almost complex structure
  $J_\Gamma$ is {\it regular} for a type of map $\Gamma$ if
  \begin{enumerate}
  \item if $\Gamma$ is uncrowded then every element of the moduli
    space $\M_\Gamma(X,J_\Gamma)$ of adapted $J_\Gamma$-holomorphic
    maps is regular; and
  \item If $\Gamma$ is crowded then there exists a regular
    $J_{\Gamma^{\s}}$ for some uncrowded type $\Gamma^{\s}$ obtained
    by forgetting all but one marking $z_e$ on each maximal ghost
    component for curves of type $\Gamma$ such that $J_{\Gamma^{\s}}$
    is equal to $J_\Gamma$ on all non-constant components, that is,
    all components of $\ol{\cU}_\Gamma$ on which the maps
    $u: C \to X$ in $\M_\Gamma(X,J_\Gamma)$ are non-constant.
\end{enumerate} 
\end{definition} 

Recall the construction by Floer \cw{ \cite[Lemma 5.1]{floer}} of a
subspace of smooth functions with a \cw{separable} Banach space
structure.  Let $\ul{\eps} = ( \eps_\ell, \ell \in \Z_{\ge 0})$ be a
sequence of constants converging to zero.  Let
$\J_\Gamma(X)_{\ul{\eps}}$ denote the space of domain-dependent almost
complex structures of finite Floer norm as in \cite[Section 5]{floer}.
In particular, $\J_\Gamma(X)_{\ul{\eps}}$ allows variations with
arbitrarily small support near any point.

\begin{proposition} 
\begin{enumerate} 
\item \label{c} For a regular domain-dependent almost complex
  structure $J_{\Gamma''}$ the pull-back $ (f')^* J_{\Gamma''}$ is
  regular, and similarly for the pull-back $(f'')^* J_{\Gamma'}$ for
  regular $J_{\Gamma'}$.
\item \label{d} Suppose that $J_\Gamma | \partial \ol{\cU}_\Gamma$ is
  $f'$-local and is a regular domain-dependent almost complex
  structure defined on the boundary
  $\partial \ol{\cU}_\Gamma \to \partial \ol{\M}_\Gamma$.  The set of
  regular $f'$-local extensions is comeager, that is, is the
  intersection of countably many sets with dense interiors.
\item \label{e} Any parametrized-regular homotopy
  $J_{\Gamma,t} | \partial \ol{\cU}_\Gamma$ between two regular
  $f'$-local domain-dependent almost complex structures
  $J_{\Gamma,0}, J_{\Gamma,1}$ on the boundary
  $\partial \ol{\cU}_\Gamma$ may be extended to a parametrized-regular
  one-parameter family of $f'$-local structures $J_{\Gamma,t}$ equal
  to $J_{\Gamma,t}$ over $\ol{\cU}_\Gamma$.
\end{enumerate} 
\end{proposition}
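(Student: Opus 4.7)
The proof splits into the three parts. For part (\ref{c}), the key observation is that when $J_\Gamma = (f')^* J_{\Gamma''}$, any adapted $J_\Gamma$-holomorphic map $u : C \to X$ of type $\Gamma$ must be constant on every $f'$-unstable component $C_v$: the restriction of $J_\Gamma$ to $C_v$ is the constant almost complex structure $J_{\Gamma''}(w)$ at $w = f'(C_v)$, so by the defining property of $\J^E(X,V',V'')$ a non-constant component would meet $V''$ in at least three distinct points, contradicting $C_v$ being $f'$-unstable. Hence $u$ factors as $u = u'' \circ f'$ for some adapted $J_{\Gamma''}$-holomorphic map $u'' : f'(C) \to X$ of type $\Gamma''$. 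The linearized Cauchy--Riemann operator then decomposes as $D_u \cong D_{u''} \oplus D_{\text{ghost}}$, where $D_{\text{ghost}}$ is a direct sum of $\olp$-operators on trivial bundles over the ghost $\C\P^1$-trees; since $D_{\text{ghost}}$ has vanishing cokernel, surjectivity of $D_{u''}$ implies surjectivity of $D_u$.

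For part (\ref{d}), I would run the standard Sard--Smale argument on a Floer-regularized universal moduli space. First use Lemma \ref{extends} to extend $J_\Gamma | \partial \ol{\cU}_\Gamma$ to an $f'$-local extension over a collar of $\partial \ol{\cU}_\Gamma$, then define
\[ \M_\Gamma^{\univ} = \{ (u, J_\Gamma) : J_\Gamma \in \J_\Gamma^{f'}(X)_{\ul\eps},\ u \in \M_\Gamma(X, J_\Gamma) \}, \]
where $\J_\Gamma^{f'}(X)_{\ul\eps}$ denotes the separable Floer-norm space of $f'$-local extensions of the prescribed boundary datum.  The key step is surjectivity of the universal linearized operator at any $(u, J_\Gamma) \in \M_\Gamma^{\univ}$.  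By Remark \ref{pullback}, an $f'$-local $J_\Gamma$ is pulled back from a local $J_{\Gamma^f}$ on the smaller universal curve $\ol{\cU}_{\Gamma^f}$, and the map $u$ factors correspondingly as $u = u_{\Gamma^f} \circ f$ for the induced forgetful map $f$.  Unique continuation for $D_u$ together with the somewhere injectivity of $u_{\Gamma^f}$ in the uncrowded case produces a point $z \in C$ lying on an $f'$-stable component whose image in $\ol{\cU}_{\Gamma^f}$ is not special; any nonzero element of $\coker D_u$ is nonzero near $z$, and arbitrarily small Floer-norm $f'$-local variations supported near $z$ pair non-trivially with it.  The standard Sard--Smale argument then yields comeagerness of the regular set.

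Part (\ref{e}) follows by the parametrized version of the same argument. Extend the boundary homotopy to an $f'$-local homotopy on a collar by applying Lemma \ref{extends} to the family $[0,1] \times \ol{\cU}_\Gamma$, and set up a parametrized universal moduli space with $t \in [0,1]$ as an additional variable. The transversality argument of (\ref{d}) applies with one extra parameter direction, and the parametrized regularity assumed at the endpoints $t = 0, 1$ ensures the boundary datum is itself regular for the parametrized problem.

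The main obstacle is the transversality step in (\ref{d}): one must verify that the constrained class of $f'$-local variations---frozen along fibers over $f'$-unstable components and forced to factor through $f'$ on the $f'$-stable ones---still supplies enough infinitesimal freedom. Remark \ref{pullback} reduces this to the standard Cieliebak--Mohnke transversality on the smaller universal curve $\ol{\cU}_{\Gamma^f}$, but one must carefully check that non-constant irreducible components of $u$ correspond to non-constant, uncrowded components of the induced map $u_{\Gamma^f}$, so that simple points away from special points really are available for producing the required local variations.
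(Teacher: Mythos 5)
Your overall strategy matches the paper's: part (\ref{c}) by factoring adapted maps through the forgetful map, part (\ref{d}) by Sard--Smale on a universal moduli space of $f'$-local perturbations with Floer-norm variations, part (\ref{e}) as the parametrized version. Your argument for (\ref{c}) is in fact more substantive than the paper's one-line justification: the observation that adapted maps for $(f')^*J_{\Gamma''}$ are constant on $f'$-unstable components, hence factor through $f'$, together with the splitting $D_u \cong D_{u''} \oplus D_{\mathrm{ghost}}$ with surjective ghost piece, is the correct mechanism.

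Part (\ref{d}), however, contains a step that fails as written, and omits a case. First, you assert that ``any nonzero element of $\coker D_u$ is nonzero near $z$'' for a single point $z$ on a non-constant, $f'$-stable component. Unique continuation for the adjoint equation $D_u^*\eta = 0$ works component by component, so a cokernel element may be supported entirely on a constant (ghost) component; there it vanishes near every such $z$, and no variation of $J_\Gamma$ reaches it, since the $J$-direction of the linearization involves $du$, which vanishes on ghost components. Ghost components do occur even for uncrowded types (a ghost sphere with one marking and two nodes, say), so this is not vacuous. The missing ingredient --- which you effectively used in (\ref{c}) but dropped here --- is the paper's separate argument that on a constant component the operator on the trivial bundle $u_v^*TX$ is already surjective, and an induction over the ghost tree with matching conditions at the nodes then forces $\eta$ to vanish on the whole constant locus. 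Second, you only treat uncrowded types. Under the paper's definition, regularity for a crowded type $\Gamma$ means exhibiting a regular $J_{\Gamma^{\s}}$ on an uncrowded type obtained by forgetting all but one marking on each maximal ghost component, agreeing with $J_\Gamma$ on the non-constant components; the paper constructs this structure explicitly (setting it equal to $J_{V',V''}$ on the collapsed components) and checks its continuity. Your closing paragraph flags this as an ``obstacle'' but does not resolve it, so the crowded case remains unproved in your write-up.
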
 

\begin{proof}
  Item \eqref{c} is immediate from the definition, since any variation
  of $J_{\Gamma''}$ induces a variation of $(f')^* J_{\Gamma''}$.
  \eqref{d} is an application of Sard-Smale applied to a universal
  moduli space.  We sketch the proof which is analogous to that in
  Cieliebak-Mohnke \cite[Chapter 5]{cm:trans}.  By Lemma
  \ref{extends}, $J_\Gamma | \partial \ol{\U}_\Gamma$ has an extension
  over the interior.  For transversality, first consider the case of
  an uncrowded type $\Gamma$ of stable map.  \cw{ Choose open subsets
    $L_\Gamma,N_\Gamma \subset \ol{\cU}_\Gamma$ of the boundary resp.
    markings and nodes, such that $L_\Gamma$ is union of fibers of
    $\ol{\cU}_\Gamma$ containing the restriction
    $\ol{\cU}_\Gamma | \partial \M_\Gamma$ and $N_\Gamma$ is
    sufficiently small so that the intersection of the complement of
    $N_\Gamma$ with each component of each fiber of $\cU_\Gamma$ not
    meeting $L_\Gamma$ is non-empty.  Let $\M^{\univ}_\Gamma(X)$
    denote the universal moduli space consisting of pairs
    $(u,J_\Gamma)$, where $u: C \to X$ is a $J_\Gamma$-holomorphic map
    of some Sobolev class $W^{k,p}, kp \ge 3, p \ge 2$ on each
    component (with $k$ sufficiently large so that the given vanishing
    order at the Donaldson hypersurfaces $V',V''$ is well-defined).
    Let $\J^E_\Gamma(X,N_\Gamma,S_\Gamma) \subset \J^E_\Gamma(X)$
    denote the space of $J_\Gamma \in \J_\Gamma^E(X)_{\ul{\eps}}$ that
    are $f'$-local domain-dependent almost complex structures that
    agree with $J_{V',V''}$ on the neighborhood $N_\Gamma$ of the
    nodes and markings $z \in \ol{\cU}_\Gamma$ that map to special
    points $f'(z) \in \ol{\cU}_{f'(\Gamma)}$ as in Definition
    \ref{plocal}, and equal to the given extension in the neighborhood
    $L_\Gamma$ of the boundary, and constant on the components
    required by $f'$-locality in Definition \ref{plocal}.  By elliptic
    regularity, $\M^{\univ}_\Gamma(X)$ is independent of the choice of
    Sobolev exponents.

    The universal moduli space is a smooth Banach manifold by an
    application of the implicit function theorem for Banach manifolds.
    Let $\cU_\Gamma^i \to \cM_\Gamma^i , i = 1,\ldots, m$ be a
    collection of open subsets of the universal curve
    $\cU_\Gamma \to \cM_\Gamma$ on which the universal curve is
    trivialized via diffeomorphisms
    $\cU_\Gamma^i \to \cM_\Gamma^i \times C$.  The space of pairs
    $(u : C \to X, J_\Gamma)$ with $[C] \in \cM_\Gamma^i$, $u$ of type
    $\Gamma$ of class $W^{k,p}$ on each component, and
    $J_\Gamma \in \J^E_\Gamma(X,N_\Gamma,S_\Gamma)$ is a smooth
    separable Banach manifold.  Since we assume that $J_\Gamma$ is
    regular on the boundary $\partial \cU_\Gamma$, an argument using
    Gromov compactness shows that by choosing $L_\Gamma$ sufficiently
    small we may assume that $\ti{D}_{u,J_\Gamma}$ is surjective for
    $[C] \in L_\Gamma$, since regularity is an open condition in the
    Gromov topology \cite[Section 10.7]{ms:jh}.  Let
    $\ti{D}_{u,J_\Gamma}$ the linearization of
    $(u,J_\Gamma) \mapsto \olp_{J_\Gamma} u$, and suppose that $\eta$
    lies in the cokernel of $\ti{D}_{u,J_\Gamma}$.  We have
    $D_u^* \eta^s = 0$ where $D_u$ is the usual linearized
    Cauchy-Riemann operator \cite[p. 258]{ms:jh} for the map; in the
    case of vanishing constraints at the Donaldson hypersurfaces see
    Cieliebak-Mohnke \cite[Lemma 6.6]{cm:trans}.  By variation of the
    almost complex structure $J_\Gamma$ and unique continuation,
    $\eta$ vanishes on any component on which $u$ is non-constant.  On
    the other hand, for any constant component $u_v: C_v \to X$, the
    linearized Cauchy-Riemann operator $D_{u_v}$ on a trivial bundle
    $u_v^* TX$ is regular with kernel $\ker(D_{u_v})$ the space of
    constant maps $\xi: C_u \to (u_v)^* TX$.  It follows by a standard
    inductive argument that the same holds true for a tree
    $C' = \cup_{v \in V} C_v , \d u |_{C'} = 0 $ of constant
    pseudoholomorphic spheres so the element $\eta$ vanishes on any
    component $C_v \subset C$ on which $u$ is constant.  It follows
    that $\cM^{\univ,i}_\Gamma(X)$ is a smooth Banach manifold.  For a
    comeager subset $\J_\Gamma^{\reg}(X) \subset \J_\Gamma(X)$ of
    partly almost complex structures in the space above, the moduli
    spaces $\cM^i_\Gamma(X) = \cM_\Gamma(X) |_{\cM^i_\Gamma}$ are
    transversally cut out for each $i = 1,\ldots, m$.  The transition
    maps between the local trivializations
    $\cM_\Gamma^i \cap \cM_\Gamma^j \to \Aut(C)$ induce smooth maps
    $\cM_\Gamma^{i}(X) |_{\cM_{\Gamma}^i \cap \cM_\Gamma^j} \to
    \cM_\Gamma^{j}(X) |_{\cM_{\Gamma}^i \cap \cM_\Gamma^j}$
    making $\cM_\Gamma(X)$ into a smooth manifold.}

  Next consider a crowded type $\Gamma$.  Let
  $f: \Gamma \to f(\Gamma)$ be a map forgetting all but one marking on
  each maximal ghost component $C' \subset C$ and stabilizing; the
  multiplicities $m'(e),m''(e)$ at any marking $z_e$ is the sum of the
  multiplicities of markings in its preimage $f^{-1}(z_e)$.  Define
  $J_{\Gamma^f}$ as follows.
\begin{enumerate} 
\item If $\cU_{\Gamma^f,v} \cong \cU_{\Gamma,v}$ let
  $J_{\Gamma^f} | \cU_{\Gamma^f,v}$ be equal to
  $J_{\Gamma} | \cU_{\Gamma,v}$.
\item Otherwise let
  $J_{\Gamma^f} : \cU_{\Gamma^f,v}\to \J^E(X,V',V'')$ be constant
  equal to $J_{V',V''}$.
\end{enumerate}
\noindent The map $J_{\Gamma^f}$ is continuous because any non-collapsed ghost
component $C_v \subset C$ must connect at least two non-ghost
components $C_{v_1} , C_{v_2} \subset C$ and the connecting points of
the non-ghost components $f'(C_{v_1}), f'(C_{v_2})$ is a node of the
curve $f'(f(C))$ of type $f'(\Gamma^f)$.  For a comeager subset of
$J_\Gamma$ described above, the complex structures $J_{\Gamma^f}$ are
also regular by the argument for uncrowded types.  Item \eqref{e} is a
parametrized version of \eqref{d}.
\end{proof} 

\begin{corollary} There exists a regular homotopy
  $J_{\Gamma,t}, t \in [-1,1]$ between $ (f'')^* J'_{\Gamma'}$ and
  $ \ (f')^* J''_{\Gamma''}$ in the space of maps
  $\ol{\U}_\Gamma \to \J(X,V',V'')$ that are $f'$-local for
  $t \in [-1,0]$ and $f''$-local for $t \in [0,1]$
\end{corollary}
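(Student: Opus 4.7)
The plan is to build the homotopy in two halves joined at $t=0$ through an almost complex structure $J_{\Gamma,0}$ that is simultaneously $f'$- and $f''$-local, whose existence is guaranteed by Remark \ref{both}. This realizes the informal strategy from the introduction: instead of directly homotoping between the two pullbacks, one first homotopes each to a structure equal to the base $J_{V',V''}$ near every special point.

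First I would choose $J_{\Gamma,0} = (f^{\s\s})^* J_{\Gamma^{\s\s}}$ to be both $f'$- and $f''$-local, and arrange for it to be regular by repeating the Sard--Smale argument in the proof of Proposition \eqref{d} within the contractible (by Lemma \ref{extends}) subclass of doubly-local structures. By Proposition \eqref{c}, the two endpoints $(f'')^* J'_{\Gamma'}$ and $(f')^* J''_{\Gamma''}$ are themselves regular.

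Next, on the interval $[-1,0]$ I would use contractibility of the space of $f'$-local structures (Lemma \ref{extends}) to pick a continuous path from $(f'')^* J'_{\Gamma'}$ to $J_{\Gamma,0}$ inside this space, and then apply Proposition \eqref{e} with the endpoints as fixed boundary data to perturb it to a parametrized-regular one-parameter family of $f'$-local structures. A symmetric construction on $[0,1]$ produces a parametrized-regular path of $f''$-local structures from $J_{\Gamma,0}$ to $(f')^* J''_{\Gamma''}$. Concatenation at $t=0$ is well-defined because both halves share the value $J_{\Gamma,0}$ there, and the full family inherits parametrized regularity on each half.

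The main technical obstacle is verifying that the Sard--Smale argument from Proposition \eqref{d} still produces surjective universal linearizations when the variations are restricted to the smaller doubly-local class needed for the midpoint $J_{\Gamma,0}$. The key check will be that variations supported away from ghost components, from special points, and from the collapsed regions forced by double locality are still available, and that such variations suffice, by unique continuation on each non-constant component exactly as in the proof of Proposition \eqref{d}, to kill cokernel elements of the universal linearized Cauchy--Riemann operator.
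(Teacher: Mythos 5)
Your proposal matches the paper's own proof: both route the homotopy through a midpoint $J_{\Gamma,0}$ that is simultaneously $f'$- and $f''$-local as in Remark \ref{both}, check regularity of the endpoints via Proposition \eqref{c}, and use the parametrized transversality statement to make each half of the concatenation regular. The one point to make explicit is that Proposition \eqref{e} takes as input a parametrized-regular homotopy already constructed on $\partial \ol{\cU}_\Gamma$ (i.e.\ over the boundary strata of $\ol{\M}_\Gamma$), so your two half-homotopies must be built by induction over combinatorial types $\Gamma$ to supply coherent boundary data --- this is the induction the paper invokes in its final sentence, and is distinct from fixing the $t$-endpoints as you describe.
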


\begin{proof}  
  Let $\ul{J} = (J_\Gamma)$ be a collection of regular
  domain-dependent almost complex structures that are both $f'$ and
  $f''$-local, 
as in Remark \ref{both}.   By part \eqref{d} above, for each type $\Gamma$ there
  exists a regular homotopy from $J_\Gamma$ to $(f')^* J_{\Gamma''}$
  resp. $(f'')^* J_{\Gamma'}$ extending given homotopies on the
  boundary.  The existence of a regular homotopy now follows by
  induction.
\end{proof}

\def\cprime{$'$} \def\cprime{$'$} \def\cprime{$'$} \def\cprime{$'$}
\def\cprime{$'$} \def\cprime{$'$}
\def\polhk#1{\setbox0=\hbox{#1}{\ooalign{\hidewidth 
      \lower1.5ex\hbox{`}\hidewidth\crcr\unhbox0}}} \def\cprime{$'$}
\def\cprime{$'$} \def\cprime{$'$} \def\cprime{$'$}

 \end{document}